\definecolor{purple}{rgb}{.5,0,1}
\newcommand{\bbC}{{\mathbb C}}
\newcommand{\bbP}{{\mathbb P}}
\newcommand{\bbR}{{\mathbb R}}
\newcommand{\bbZ}{{\mathbb Z}}
\newcommand{\tr}{{\mathrm{tr}}}
\newcommand{\sW}{{\mathscr{W}}}
\newcommand{\dd}{{\mathrm{d}}}
\newcommand{\SL}{{\mathrm{SL}}}
\renewcommand{\Im}{{\mathrm{Im}}}
\DeclareMathOperator{\supp}{{supp}}
\newtheorem{theorem}{Theorem}[section]
\theoremstyle{definition}
\theoremstyle{definition}
\newtheorem{lemma}[theorem]{Lemma}
\numberwithin{theorem}{section}
\numberwithin{equation}{section}
\begin{document}

\author[V.\ Bucaj]{Valmir Bucaj}
\address{Department of Mathematics, United States Military Academy, West Point, NY~10996, USA}
\email{valmir.bucaj@westpoint.edu}
\thanks{V.B.\ and V.G.\ were supported in part by NSF grant DMS--1361625.}

\author[D.\ Damanik]{David Damanik}
\address{Department of Mathematics, Rice University, Houston, TX~77005, USA}
\email{damanik@rice.edu}
\thanks{D.D.\ was supported in part by NSF grants DMS--1361625 and DMS--1700131.}

\author[J.\ Fillman]{Jake Fillman}
\address{Department of Mathematics, Virginia Tech, 225 Stanger Street, Blacksburg, VA~24061, USA}
\email{fillman@vt.edu}
\thanks{J.F.\ was supported in part by an AMS-Simons travel grant, 2016--2018}

\author[V.\ Gerbuz]{Vitaly Gerbuz}
\address{Department of Mathematics, Rice University, Houston, TX~77005, USA}
\email{vitaly.gerbuz@rice.edu}

\author[T.\ VandenBoom]{Tom VandenBoom}
\address{Department of Mathematics, Yale University, New Haven, CT~06511, USA}
\email{thomas.vandenboom@yale.edu}
\thanks{T.V.\ was supported in part by an AMS-Simons travel grant, 2018-2020.}

\author[F.\ Wang]{Fengpeng Wang}
\address{School of Mathematics, Ocean University of China, Qingdao, China 266100}
\email{wfpouc@163.com}
\thanks{F.W.\ was supported by CSC (No.201606330003) and NSFC (No.11571327).}

\author[Z.\ Zhang]{Zhenghe Zhang}
\address{Department of Mathematics, UC Riverside, Riverside, CA~92521, USA}
\email{zhenghe.zhang@ucr.edu}
\thanks{Z.Z.\ was supported in part by NSF grant DMS--1764154.}

\title[Positive Lyapunov Exponents and LDT for Continuum Anderson Models]{Positive Lyapunov Exponents and a Large Deviation Theorem for Continuum Anderson Models, Briefly}

\begin{abstract}
In this short note, we prove positivity of the Lyapunov exponent for 1D continuum Anderson models by leveraging some classical tools from inverse spectral theory. The argument is much simpler than the existing proof due to Damanik--Sims--Stolz, and it covers a wider variety of random models.  Along the way we note that a Large Deviation Theorem holds uniformly on compacts.
\end{abstract}

\maketitle

\vspace{-1cm}
\section{Introduction}
\subsection{Background}
It is well understood that random Schr\"odinger operators in one space dimension exhibit Anderson localization!

While this introductory statement is correct in many ways, it is nevertheless important to clarify what is actually meant. Does one talk about spectral localization or dynamical localization? Does one consider the discrete setting or the continuum setting? Even if one considers the (easier) discrete setting, is the assertion made for the standard model, or for more general models such as the ones considered in \cite{DSS2004}? What is assumed about the single-site distribution?

It is true that no matter how one answers these questions, localization is indeed known. However, the difficulty of the known proofs depends heavily on the answers. For example, the proofs are short and elegant in the discrete setting with an absolutely continuous single-site distribution, but they can be quite difficult once the continuum setting and/or singular single-site distributions are considered.

Some of the landmark papers are Kunz-Souillard \cite{KS80} (standard discrete model with an absolutely continuous single-site distribution), Carmona-Klein-Martinelli \cite{CKM87} (standard discrete model with a general single-site distribution), and Damanik-Sims-Stolz \cite{DSS2002Duke} (standard continuum model with a general single-site distribution).

In the case of a general single-site distribution, the localization proof typically consists of two steps. First, one proves that the Lyapunov exponent is positive for a sufficiently large set of energies by an application of F\"urstenberg's theorem, and second, one parlays this positivity statement into the exponential decay of generalized eigenfunctions, showing in effect that the spectrum is pure point and the eigenfunctions decay exponentially. A second look at the structure of the eigenfunctions then allows one to control their semi-uniform localization properties, which in turn yields dynamical localization. Traditionally, this second step was performed via multi-scale analysis.

The first step is very easy to implement in the discrete case and the Lyapunov exponent turns out to be positive for every energy via a straightforward verification of the assumptions of F\"urstenberg's theorem. In the continuum case, on the other hand, verifying these assumptions is less straightforward; it is was accomplished in \cite{DSS2002Duke} away from a discrete set of energies via inverse scattering theory.

In scenarios where the initial proofs were involved, it is of interest to find simplifications of the arguments. For the standard discrete model with a general single-site distribution studied in \cite{CKM87}, there have been several recent papers proposing such simplifications \cite{BDFGVWZ, GK18, JZ18}. These new proofs simplify the second step in the two-step procedure described above (since the first step cannot be simplified, as indicated above).

In this paper we take a new look at the first step in this procedure for continuum models. Rather than using inverse scattering theory we will use inverse spectral theory, and the resulting proof of positive Lyapunov exponents for energies outside a discrete set turns out to be significantly simpler. Our setting is also more general than that of \cite{DSS2002Duke}, so that technically speaking, we generalize the scope of the approach.

We also discuss a Large Deviation Theorem (LDT), which demonstrates that the second step in the localization proof for continuum models can then be carried out in complete analogy to the treatment of the discrete case developed in \cite{BDFGVWZ}. Since this is entirely straightforward, we do not carry this out explicitly, but merely note that with the present work and \cite{BDFGVWZ}, both steps in the two-step procedure to prove localization for 1D continuum Anderson models have been simplified.

\subsection{Main Result}
Fix two parameters $0 < \delta \leq m$, and define
\[
\sW
=
\bigcup_{\delta \leq s \leq m} L^2[0,s).
\]
To distinguish the fibers, let us denote the length of the domain by $s = \ell(f)$ whenever $f \in L^2[0,s)$. We specify a continuum Anderson model by choosing a probability measure $\widetilde\mu$ on $\sW$ such that
\begin{equation}\label{cdn:mutildeUnifL2bd}\tag{$\widetilde{\mu}$\,Bd}
\widetilde\mu\text{-}\mathrm{ess} \sup \|f\|_{L^2} < \infty.
\end{equation}
We naturally obtain the full shift
\[
\Omega = \sW^{\bbZ}, \quad \mu = \widetilde{\mu}^{\bbZ}, \quad
[T\omega]_n = \omega_{n+1}.
\]
Then, for each $\omega \in \Omega$, we obtain a potential $V_\omega$ by concatenating $\ldots,\omega_{-1},\omega_0,\omega_1,\ldots$, and an associated Schr\"odinger operator $H_\omega = -\partial_x^2 + V_\omega$.  More specifically, define
\begin{equation} \label{eq:concatEndptDef}
s_n
=
s_n(\omega)
:=
\begin{cases}
\sum_{j=0}^{n-1} \ell(\omega_j) & n \geq 1 \\
0 & n = 0 \\
-\sum_{j=n}^{-1} \ell(\omega_j) & n \leq -1,
\end{cases}
\end{equation}
denote $I_n = [s_{n},s_{n+1})$, and define
\begin{equation}\label{eq:concatDef}
V_\omega(x)
=
\omega_n(x - s_{n}),\quad
\text{ for each } x \in I_n.
\end{equation}

Let us note that this setting, which is related to those considered in \cite{dfg, DSS2004}, includes that of \cite{DSS2002Duke} as a special case.


For each $w \in \sW$, $E \in \bbC$,  $A^E(w)$ is the unique $\SL(2,\bbC)$ matrix with
\begin{align}\label{eq:cocycle}
\begin{bmatrix}
\psi(s_1) \\ \psi'(s_1)
\end{bmatrix}
=
A^E(w)
\begin{bmatrix}
\psi(0) \\ \psi'(0)
\end{bmatrix}
\end{align}
whenever $H_\omega \psi = E \psi$ with $\omega_0 = w$. For each $E$, the \emph{Lyapunov exponent} is given by
\[
L(E)
=
\lim_{n\to\infty} \frac{1}{n} \int_\Omega  \log\|A^E_n(\omega) \| \, d\mu(\omega),
\quad
\text{where } A^E_n(\omega) = A^E(\omega_{n-1}) \cdots A^E(\omega_0), \ n \geq 1.
\]

One obvious obstruction to localization is if all elements of the support of $\widetilde\mu$ commute in the free product sense, so that one cannot distinguish permutations of elements of the support after concatenation. When this is the case, all realizations $V_\omega$ are periodic, and localization clearly fails. This is the only obstruction to localization; we formulate the negation of this as our nontriviality condition.  For $f_j \in L^2[0,a_j)$, $j=1,2$, we write
\[
(f_1\star f_2)(x)
=
\begin{cases}
f_1(x) & 0 \le x < a_1 \\
f_2(x-a_1) & a_1 \le x < a_1+a_2.
\end{cases}.
\]
The \emph{nontriviality condition} is then the following:
\begin{equation}\tag{NC}\label{eq:NCCAMdef}
\text{ There exist } f_j \in \supp \widetilde\mu \text{ such that }
f_1\star f_2 \neq f_2 \star f_1.
\end{equation}
Let us note that the equality that fails in \eqref{eq:NCCAMdef} is in $L^2$, so we really mean that $f_1\star f_2$ and $f_2 \star f_1$ differ on a set of positive Lebesgue measure.

\begin{theorem} \label{t:main}
If $\widetilde\mu$ satisfies \eqref{eq:NCCAMdef} and \eqref{cdn:mutildeUnifL2bd}, then there is a discrete set $D \subset \bbR$ such that the Lyapunov exponent is positive away from $D$:
\begin{align*}
L(E) > 0 \; \text{ for all } \, E \in \bbR\setminus D.
\end{align*}
Furthermore, for any compact set $K \subset \bbR \setminus D$ and any $\varepsilon > 0$, there exist $C = C(\varepsilon,K) > 0$ and $\eta = \eta(\varepsilon,K) > 0$ such that
\begin{align*}
\mu\left\{ \omega : \left|\frac{1}{n}\log\|A_n^E(\omega)\| - L(E) \right| \geq \varepsilon \right\} \leq Ce^{-\eta n}
\end{align*}
for all $n \in \bbZ_+$ and all $E \in K$.
\end{theorem}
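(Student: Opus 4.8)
The plan is to reduce the continuum problem to the hypotheses of Furstenberg's theorem together with the classical Le Page / Bougerol–Lacroix theory, using inverse spectral theory to guarantee non-triviality of the associated group of transfer matrices. First I would observe that, by the bound \eqref{cdn:mutildeUnifL2bd} and standard ODE estimates (Gronwall), for each fixed $E \in \bbR$ the single-step transfer matrices $A^E(w)$ form a family in $\SL(2,\bbR)$ that is uniformly bounded (in norm and inverse-norm) as $w$ ranges over $\supp\widetilde\mu$; moreover $E \mapsto A^E(w)$ is real-analytic, and these bounds are locally uniform in $E$. Let $G_E$ denote the closed subgroup of $\SL(2,\bbR)$ generated by $\{A^E(w) : w \in \supp\widetilde\mu\}$. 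The heart of the matter is to show that, outside a discrete set $D$, the group $G_E$ is \emph{non-compact} and \emph{strongly irreducible} (not contained in a conjugate of the group preserving a finite union of lines). Granting this, Furstenberg's theorem yields $L(E) > 0$ for $E \notin D$, and the uniform large deviation estimate on compact subsets of $\bbR\setminus D$ follows from the Le Page–type theory (see Bougerol–Lacroix): once one has, on a compact $K$, uniform non-compactness and strong irreducibility together with the uniform $\SL(2,\bbR)$-bounds and analyticity, the top Lyapunov exponent is locally Hölder (in fact the relevant transfer operator has a spectral gap depending continuously on $E$), and this spectral gap is exactly what powers the exponential large deviation bound $Ce^{-\eta n}$ with constants uniform over $K$.

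The main obstacle — and the place where inverse spectral theory enters — is establishing strong irreducibility and non-compactness of $G_E$ away from a discrete set. The strategy here is contrapositive: suppose that for $E$ in a set with an accumulation point, $G_E$ is either compact or preserves a finite union of directions. In either case a standard argument (Furstenberg, or the ``deterministic'' dichotomy) forces every pair $A^E(f_1), A^E(f_2)$ with $f_1, f_2 \in \supp\widetilde\mu$ to share an eigenbasis or to be simultaneously conjugate into $\SO(2,\bbR)$; in particular the group generated by $A^E(f_1\star f_2)$ and $A^E(f_2\star f_1)$ — which are the transfer matrices over the concatenated potentials $f_1\star f_2$ and $f_2\star f_1$ — is reducible for all such $E$. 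But $A^E(f_1\star f_2) = A^E(f_2) A^E(f_1)$ and $A^E(f_2\star f_1) = A^E(f_1)A^E(f_2)$, and reducibility of this pair for infinitely many $E$ (with an accumulation point) forces, by analyticity in $E$, that $A^E(f_1)$ and $A^E(f_2)$ commute identically in $E$. Now comes the inverse-spectral input: two half-line Schr\"odinger operators (with Dirichlet data) whose transfer matrices coincide as functions of $E$ must have the same potential — this is a Borg–Marchenko-type uniqueness statement, and more simply, the entries of $A^E(f)$ as entire functions of $E$ determine the Weyl $m$-function of the problem on $[0,\ell(f))$, hence the potential. Commuting transfer matrices for all $E$ then translate into the statement that concatenating $f_1$ and $f_2$ in either order produces $L^2$-identical potentials, i.e. $f_1\star f_2 = f_2\star f_1$ in $L^2$, contradicting \eqref{eq:NCCAMdef}.

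To make the second step precise I would set up the Weyl–Titchmarsh $m$-function $m_f(E)$ associated to $-\partial_x^2 + f$ on $[0,\ell(f))$ with a Dirichlet (or any fixed self-adjoint) boundary condition at the right endpoint, note that $m_f$ is a Herglotz function determined by the four entire entries of $A^E(f)$ via a M\"obius action, and invoke the classical fact (Borg, Levinson, Marchenko; or the high-energy asymptotics of $m_f$ à la Simon) that $m_f$ determines $f$ on its whole interval. The identity $A^E(f_1\star f_2) = A^E(f_2)A^E(f_1)$ is just the multiplicativity of transfer matrices under concatenation of potentials and requires no work. The only genuinely delicate point is that the exceptional set $D$ is \emph{discrete} rather than merely countable or measure-zero: this is where analyticity is essential — the ``bad'' set on which $G_E$ fails strong irreducibility is, by the above, contained in the zero set of a nontrivial real-analytic function of $E$ (the obstruction to irreducibility can be phrased as the vanishing of a fixed analytic expression in the matrix entries of $A^E(f_1), A^E(f_2)$), and a real-analytic function on $\bbR$ that is not identically zero has a discrete zero set. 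I expect the write-up's principal effort to go into (i) pinning down the exact analytic function whose zero set bounds $D$ and checking it is not identically zero via \eqref{eq:NCCAMdef}, and (ii) citing the correct uniform version of the Le Page large-deviation machinery so that $C$ and $\eta$ depend only on $K$ and $\varepsilon$; the Furstenberg positivity itself is then immediate.
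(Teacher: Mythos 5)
Your overall architecture is the same as the paper's: F\"urstenberg's theorem for positivity, a uniform Le Page--type large deviation estimate on compacts (the paper quotes this as Theorem~\ref{t:LDT} from its companion paper), multiplicativity of transfer matrices under concatenation plus Borg--Marchenko uniqueness (Lemma~\ref{l:BorgMarchenko}) to convert \eqref{eq:NCCAMdef} into non-commutativity of $E \mapsto A^E(f_1)$ and $E \mapsto A^E(f_2)$ at some energy, and analyticity in $E$ to make the exceptional set discrete. All of that matches the paper.

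The gap is in the step ``reducibility of the pair for infinitely many $E$ with an accumulation point forces, by analyticity, that $A^E(f_1)$ and $A^E(f_2)$ commute identically in $E$.'' Failure of strong irreducibility only gives a common invariant line, or a common invariant pair of lines; neither implies commutation for general real matrices. Two non-commuting upper triangular matrices preserve the line $\bbR e_1$, so ``sharing an eigenvector for an accumulating set of $E$'' --- equivalently, the identical vanishing of the analytic function $\det[A^E(f_1),A^E(f_2)]$ --- is compatible with $[A^E(f_1),A^E(f_2)] \not\equiv 0$, which is all that \eqref{eq:NCCAMdef} plus Borg--Marchenko gives you. The paper closes this gap in Theorem~\ref{t:paramFurst} with a spectral-theoretic input your outline does not use: since $\tr A^E(f_1)$ is a nonconstant entire function taking values in $[-2,2]$ only at real $E$ (it is the discriminant of a periodic Schr\"odinger operator), one can pick, via Picard, a real $w$ avoiding the discrete set where the commutator vanishes at which $A^w(f_1)$ is elliptic. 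An elliptic real matrix has a conjugate pair of non-real eigendirections, so if it shares one eigenvector with another real matrix it shares both and the two commute; hence $\det[A^w(f_1),A^w(f_2)] \neq 0$, the determinant of the commutator is not identically zero, and its zero set is discrete. A second small point you skip: an invariant pair of lines can be preserved with a generator swapping them, which is ruled out not by eigenvector considerations but by noting that a line-swapping matrix has trace zero and that $\tr A^E(f_j)$ vanishes only on a discrete set of $E$. Without these two ingredients the exceptional set is not under control and the contradiction with \eqref{eq:NCCAMdef} is not reached.
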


In particular, the nontriviality condition \eqref{eq:NCCAMdef} implies the critical assumptions of F\"urstenberg's theorem and a contractivity property crucial to proving the Large Deviation result.  The boundedness assumption \eqref{cdn:mutildeUnifL2bd} then provides a sufficient regularity property of the cocycle to complete the proof.

\section{Proof of Theorem}

\subsection{A One-Parameter Reformulation of F\"urstenberg's Theorem}




Let $G \subseteq \SL(2,\bbR)$ be a subgroup. We say that $G$ is \emph{contracting} if there exist $g_1,g_2, \ldots \in G$ such that $\|g_n\|^{-1} g_n$ converges to a rank-one operator as $n \to \infty$. Let us say that $G$ is a \emph{type-F subgroup} if $G$ satisfies the following conditions:
\begin{enumerate}
\item $G$ is a contracting subgroup\footnote{Note that if $G$ is contracting, then trivially, $G$ is not compact.} of $\SL(2,\bbR)$.
\item There does not exist $\Lambda \subseteq \bbR\bbP^1$ of cardinality one or two such that $g \Lambda = \Lambda$ for all $g \in G$.
\end{enumerate}

Note that being type-F is monotone in the sense that if $G_1 \subseteq G_2$ are subgroups of $\SL(2,\bbR)$ and $G_1$ is type-F, then $G_2$ is also type-F.

\begin{theorem} \label{t:paramFurst}
Suppose $A,B: \bbC \to \SL(2,\bbC)$ satisfy the following properties:
\begin{align}
& A \text{ and } B \mbox{ are real-analytic functions}\footnotemark  \label{eq:ABanalytic}  \\
& \tr\, A \text{ and } \tr\, B \text{ are nonconstant functions} \label{eq:traceNonConst} \\
& \tr\, A(z) \in [-2,2] \implies z \in \bbR \label{eq:ellipticRealEnergy} \\
& [A(z_0), B(z_0)] \neq 0 \text{ for at least one } z_0 \in \bbC \label{eq:ABdontCommute}.
\end{align}
Then, there is a discrete set $D \subseteq \bbR$ such that the subgroup generated by $A(x)$ and $B(x)$ is a type-F subgroup of $\SL(2,\bbR)$ for any $x \in \bbR \setminus D$.
\footnotetext{That is to say, $A(z)$ and $B(z)$ are analytic functions of $z$ whose entries are real when $\Im \, z = 0$.}
\end{theorem}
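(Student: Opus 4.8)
The plan is to establish the two conditions of the type-F property separately, using real-analyticity to ensure each holds away from a discrete set, and then intersect the two discrete exceptional sets. First I would address condition (2), the absence of a finite invariant set $\Lambda \subseteq \bbR\bbP^1$ of cardinality one or two. For each $x \in \bbR$, let $G_x$ be the group generated by $A(x)$ and $B(x)$. If $G_x$ fixes a one- or two-point set in $\bbR\bbP^1$, then $A(x)$ and $B(x)$ each either fix each point or swap the two points; in either case $A(x)^2$ and $B(x)^2$ share a common eigenvector, which forces $\tr[A(x)^2,B(x)^2] = \tr(\mathrm{Id}) $ — more precisely, a standard $\SL(2)$ computation shows that two matrices in $\SL(2,\bbC)$ have a common eigenvector if and only if $\tr([M,N]) = 2$. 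Thus the set of $x$ for which such a $\Lambda$ exists is contained in the zero set of the real-analytic function $x \mapsto \tr([A(x)^2,B(x)^2]) - 2$ (together with the analogous function built from $A(x), B(x)$ themselves and the mixed products accounting for the swap case). By \eqref{eq:ABdontCommute} and the identity theorem, these functions are not identically zero — here one must check that noncommutativity at a single $z_0 \in \bbC$ propagates to show the relevant trace functions are nonconstant — so their real zero sets are discrete. Call the union of these sets $D_2$.

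Next I would handle condition (1), contractivity, together with the requirement that $G_x \subseteq \SL(2,\bbR)$ rather than merely $\SL(2,\bbC)$. The containment in $\SL(2,\bbR)$ is immediate for $x \in \bbR$ from real-analyticity \eqref{eq:ABanalytic}. For contractivity: by \eqref{eq:traceNonConst}, $\tr A$ is nonconstant and real-analytic on $\bbR$, so the set where $|\tr A(x)| \le 2$ — call it the "elliptic locus" of $A$ — has the property that its complement is a nonempty open set (in fact its complement is dense, since otherwise $\tr A \equiv$ const on an interval hence everywhere). On the complement of the elliptic locus, $A(x)$ is hyperbolic (or parabolic on the boundary), and iterating a hyperbolic element produces a sequence whose normalized powers converge to a rank-one projection. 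Hence for every $x$ outside the discrete set $D_1 := \{x \in \bbR : |\tr A(x)| \le 2\}$ — discrete because $\tr A$ is real-analytic and nonconstant — the group $G_x$ contains a hyperbolic element and is therefore contracting. (Condition \eqref{eq:ellipticRealEnergy} is not strictly needed for this half, but it guarantees that the "interesting" spectral parameters are real, which is what ties this back to the application.)

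Finally I would set $D = D_1 \cup D_2$, which is discrete as a finite union of discrete sets, and conclude that for $x \in \bbR \setminus D$ the group $G_x$ is a contracting subgroup of $\SL(2,\bbR)$ with no invariant set of cardinality one or two, i.e., type-F. The main obstacle I anticipate is the bookkeeping in condition (2): one must enumerate the ways a group can preserve a one- or two-element subset of $\bbR\bbP^1$ (common fixed point; two common fixed points; a common two-cycle), translate each into a real-analytic equation in $x$, and verify via \eqref{eq:ABdontCommute} and \eqref{eq:traceNonConst} that none of these equations holds identically — the subtlety being that $[A(z_0),B(z_0)] \ne 0$ at one complex point must be leveraged, through the identity theorem for real-analytic (hence locally analytic) functions, to kill an identity that a priori only needs to be tested on $\bbR$. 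A clean way to organize this is to note that all these obstructions are encoded by the vanishing of finitely many entries of the real-analytic matrix-valued function $[A(z),B(z)]$ or of commutators of low powers, reducing everything to: a real-analytic function on $\bbR$ that extends to be analytic on $\bbC$ and is nonzero somewhere on $\bbC$ has discrete zero set on $\bbR$.
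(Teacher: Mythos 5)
There are two genuine gaps here. First, your treatment of contractivity fails: the set $D_1 = \{x \in \bbR : |\tr A(x)| \le 2\}$ is the preimage of an interval under a nonconstant real-analytic function, and such a preimage is in general a union of nondegenerate closed intervals, not a discrete set. (In the intended application $A^E$ is the monodromy of a periodic Schr\"odinger operator, and $\{E : |\tr A^E| \le 2\}$ is exactly its band spectrum; excluding it would gut the theorem.) The paper's proof avoids this entirely: once $[A(x),B(x)] \neq 0$, the generated group is nonabelian, and any nonabelian subgroup of $\SL(2,\bbR)$ contains a non-elliptic element $h$ (cf.\ \cite[Theorem 10.4.14]{SimOPUC2}), so the powers $h^n$ furnish the contracting sequence even at points where $A(x)$ and $B(x)$ are both elliptic.

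Second, your condition (2) argument rests on showing that the relevant real-analytic functions (e.g.\ $\tr([A(z)^2,B(z)^2]) - 2$) are not identically zero, and you correctly flag that this must somehow be extracted from \eqref{eq:ABdontCommute}; but $[A(z_0),B(z_0)] \ne 0$ does \emph{not} imply that $A(z_0)$ and $B(z_0)$ lack a common eigenvector (two noncommuting upper-triangular matrices share an eigenvector), so the step you defer is not routine bookkeeping --- it is the heart of the proof, and your plan supplies no mechanism for it. The paper's mechanism uses exactly the hypotheses you set aside: by Picard's theorem applied to the nonconstant entire function $\tr A$ from \eqref{eq:traceNonConst}, there is a point $w$ with $\tr A(w) \in (-2,2)$ and $[A(w),B(w)] \neq 0$; by \eqref{eq:ellipticRealEnergy} this $w$ is real, so $A(w)$ is an elliptic matrix with real entries, and an elliptic real matrix sharing an eigenvector with another real matrix must commute with it. Hence $\det[A(w),B(w)] \neq 0$ at one point, and analyticity yields a discrete exceptional set. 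Your parenthetical that \eqref{eq:ellipticRealEnergy} ``is not strictly needed'' is therefore backwards: it, together with \eqref{eq:traceNonConst}, is precisely what converts noncommutativity at a single point into the absence of a common eigenvector off a discrete set. (Your reduction of the two-point invariant case to common eigenvectors is fine in spirit; the paper handles the swap case more simply by also discarding the discrete zero sets of $\tr A$ and $\tr B$, since any element transposing the two lines must have trace zero.)
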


\begin{proof}
Using \eqref{eq:ABanalytic} and \eqref{eq:ABdontCommute}, we deduce that there is a discrete set $D_0 \subseteq \bbC$ such that $[A(z),B(z)] \neq 0$ for all $z \in \bbC \setminus D_0$. Combining this with \eqref{eq:traceNonConst} and applying Picard's Theorem to $\tr \, A(\cdot)$, we see that there exists $w \in \bbC \setminus D_0$ such that $\tr \, A(w)\in (-2,2)$. By \eqref{eq:ellipticRealEnergy}, one has $w \in \bbR$, hence $A(w), B(w) \in\SL(2,\bbR)$ by \eqref{eq:ABanalytic}.

Notice $\det \, [A(w),B(w)] \neq 0$. To see this, suppose on the contrary that $\det \, [A(w), B(w)] = 0$. Then, $A(w)$ and $B(w)$ have a common eigenvector; since $A(w)$ is elliptic and $B(w)$ has real entries, this would imply that $A(w)$ and $B(w)$ commute, contradicting $[A(w),B(w)] \neq 0$.

Since $\det [A(z), B(z)]$ is an analytic function of $z$ which does not vanish identically, there is a discrete set $D_1$ such that $\det \, [A(z), B(z)] \neq 0$ for $z \in \bbC \setminus D_1$. Combining this with \eqref{eq:ABanalytic} and \eqref{eq:traceNonConst}, we obtain $D$, a discrete set such that
\[
\det \, [A(z), B(z)] \neq 0,
\quad
\tr\, A(z) \neq 0,
\quad
\tr\,B(z) \neq 0,
\quad z \in \bbC \setminus D.
\]

Let us show that the subgroup generated by $A(x)$ and $B(x)$ is of type F for any $x \in \bbR \setminus D$; to that end, fix $x \in \bbR \setminus D$, write $A = A(x)$, $B = B(x)$, and let $G$ denote the group generated by $A$ and $B$. Since $[A, B] \neq 0$, this implies that $G$ contains a non-elliptic element, $h$ (cf.\ \cite[Theorem 10.4.14]{SimOPUC2}) and hence is contracting (use $g_n = h^n$ to see this). Since $\det \, [A, B] \neq 0$, it follows that $A$ and $B$ have no common eigenvectors. In particular, there cannot be a set $\Lambda \subseteq \bbR\bbP^1$ of cardinality one with $A\Lambda = B \Lambda = \Lambda$.  Suppose instead there exists $\Lambda  \subseteq \bbR\bbP^1$ of cardinality two such that $A\Lambda = B\Lambda = \Lambda$, and denote $\Lambda  = \{\bar{u}_1,\bar{u}_2\}$. Since $\tr \, A \neq 0$,  one cannot have $A\bar{u}_1 = \bar{u}_2$ and $A\bar{u}_2 = \bar{u}_1$, which forces $A \bar{u}_j = \bar{u}_j$ for $j=1,2$. Similarly, $\tr\, B \neq 0$ forces $B \bar{u}_j = \bar{u}_j$. However, this again contradicts the lack of shared eigenspaces between $A$ and $B$. \qedhere

\end{proof}

\subsection{Proof of Theorem}

In our setting, a special case of a classical theorem of F\"urstenberg will yield positive Lyapunov exponent:

\begin{theorem}\label{t:fberg}
For $E \in \bbR$, define  $A^E : \sW \to \SL(2,\bbR)$ as above, let $\nu_E := A^E_\ast\widetilde{\mu}$ be the pushforward of $\widetilde\mu$ under $A^E$, denote by $G_{\nu_E}$ the smallest closed subgroup of $\SL(2,\bbR)$ containing $\supp \nu_E$, and suppose that $\int\log\|M\| \, \dd \nu_E(M) < \infty$. If
$G_{\nu_E}$ is a type-F subgroup of $\SL(2,\bbR)$, then the Lyapunov exponent $L(E) > 0$ is positive.
\end{theorem}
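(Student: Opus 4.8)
The plan is to recognize Theorem~\ref{t:fberg} as a direct application of F\"urstenberg's classical theorem on positivity of the top Lyapunov exponent of a random product of $\SL(2,\bbR)$ matrices, the notion of a type-F subgroup having been arranged precisely so as to encode F\"urstenberg's non-degeneracy hypotheses. Recall one standard form of that theorem: if $\nu$ is a Borel probability measure on $\SL(2,\bbR)$ with $\int\log\|M\|\,\dd\nu(M)<\infty$ such that the smallest closed subgroup $G_\nu \subseteq \SL(2,\bbR)$ containing $\supp\nu$ is non-compact and admits no invariant subset $\Lambda \subseteq \bbR\bbP^1$ of cardinality one or two, then the top Lyapunov exponent associated to $\nu$ is strictly positive. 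We apply this with $\nu = \nu_E$, so that $G_\nu = G_{\nu_E}$ and the top Lyapunov exponent equals $L(E)$. The integrability hypothesis is assumed outright in the statement; non-compactness of $G_{\nu_E}$ follows from the first defining property of a type-F subgroup, since a contracting subgroup is never compact; and the absence of a $G_{\nu_E}$-invariant set of cardinality one or two is literally the second defining property. Hence F\"urstenberg's theorem applies and $L(E)>0$.

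If one prefers the formulation of F\"urstenberg's theorem stated in terms of \emph{strong irreducibility} --- the absence of \emph{any} finite $G_{\nu_E}$-invariant subset of $\bbR\bbP^1$ --- then one extra observation closes the gap: for a non-compact, hence infinite, subgroup $G \subseteq \SL(2,\bbR)$, ruling out invariant sets of cardinality one or two already forces strong irreducibility. Indeed, if $\Lambda \subset \bbR\bbP^1$ were a finite invariant set with three or more elements, the induced action of $G$ on $\Lambda$ would be a homomorphism into the finite group $\mathrm{Sym}(\Lambda)$, whose kernel $N$ then has finite index in $G$ and is therefore infinite; but each element of $N$ fixes three or more distinct points of $\bbR\bbP^1$ and so must equal $\pm I$, yielding the contradiction $N \subseteq \{\pm I\}$.

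I do not expect a genuine obstacle in this argument, since it is entirely classical; the only point that calls for care is aligning the precise hypotheses of whichever version of F\"urstenberg's theorem one invokes (non-compactness versus the contracting property, the group versus the semigroup it generates, cardinality at most two versus strong irreducibility) with the definition of a type-F subgroup, and the argument of the preceding paragraph shows these are interchangeable, so that the type-F conditions are precisely what F\"urstenberg's theorem requires.
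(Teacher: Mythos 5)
Your proposal is correct and follows exactly the route the paper takes: the paper also treats Theorem~\ref{t:fberg} as an immediate consequence of F\"urstenberg's theorem, remarking that the type-F conditions imply noncompactness and strong irreducibility and citing Bougerol--Lacroix for that implication. Your second paragraph simply supplies a (correct) self-contained argument for the equivalence that the paper delegates to the reference.
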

Theorem~\ref{t:fberg} was originally proved by F\"urstenberg under the assumption that $G_{\nu_E}$ is noncompact and strongly irreducible \cite{f}. The sufficient criterion stated here implies strong irreducibility and noncompactness; see, e.g.\ \cite{bougerollacroix}.

Under regularity assumptions on the cocycle one can conclude a uniform Large Deviation Theorem:
\begin{theorem}[Theorem 3.1, \cite{BDFGVWZ}]\label{t:LDT}
Let $K \subset \bbR$ be a compact set, and consider a map $A : K \times \sW \to \SL(2,\bbR)$ such that, for every $E \in K$, $A^E := A(E,\cdot)$ satisfies the assumptions of Theorem \ref{t:fberg}.   Suppose that $A$ also satisfies the following properties:
\begin{equation} \label{cdn:UE} \tag{UnifEq}
\{ E \mapsto A^E(w) : w \in \sW\} \text{ is uniformly equicontinuous;}
\end{equation}
\begin{equation} \label{cdn:UB} \tag{UnifBd}
\exists C>0 \text{ such that } \sup_{E \in K, \ w \in \sW} \|A^E(w)\| \leq C\text{ for $\widetilde\mu$-a.e.\ $w$;}
\end{equation} and, for all $E \in K$,
\begin{equation}\label{cdn:ctrct}\tag{Ctrct}
G_{\nu_E} \text{ is a contracting subgroup of  } \SL(2,\bbR).
\end{equation}
Then, for any $\varepsilon > 0$, there exists $C = C(\varepsilon,K) > 0$, $\eta = \eta(\varepsilon,K) > 0$ such that
\begin{align*}
\mu\left\{ \omega : \left|\frac{1}{n}\log\|A_n^E(\omega)\| - L(E) \right| \geq \varepsilon \right\} \leq Ce^{-\eta n}
\end{align*}
for all $n \in \bbZ_+$ and all $E \in K$.
\end{theorem}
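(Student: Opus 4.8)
The plan is to follow the classical spectral (transfer-operator) route to large deviations for i.i.d.\ products in $\SL(2,\bbR)$, due in essence to Le Page, and to extract uniformity in $E$ from the quantitative hypotheses \eqref{cdn:UB} and \eqref{cdn:UE}. Throughout, write $\sigma(M,\bar v) = \log(\|Mv\|/\|v\|)$ for the norm cocycle on $\bbR\bbP^1$, and note that \eqref{cdn:UB} together with $\det M = 1$ gives $|\sigma(A^E(w),\bar v)| \le \log C$ uniformly in $E \in K$, $w \in \sW$, and $\bar v \in \bbR\bbP^1$. First I would reduce the two-sided estimate for the operator norm $\|A_n^E\|$ to the analysis of a single orbit $\|A_n^E v\|$: the lower tail follows from $\|A_n^E\| \ge \|A_n^E e_1\|$, so that $\mu\{\tfrac1n\log\|A_n^E\| \le L(E)-\e\}$ is dominated by the single-vector lower tail; the upper tail reduces, via $\|A_n^E\|^t \le \|A_n^E e_1\|^t + \|A_n^E e_2\|^t$ for $0 < t \le 1$ and Chebyshev's inequality, to moment bounds on single orbits. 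It thus suffices to produce such bounds with rates uniform over $E \in K$.

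For fixed $E$, introduce on a Hölder space $C^\gamma(\bbR\bbP^1)$ the analytic family of bounded operators
\[
(\CL_s^E \phi)(\bar v) = \int_{\SL(2,\bbR)} e^{s\,\sigma(M,\bar v)}\, \phi(M\cdot\bar v)\, \dd\nu_E(M),
\qquad s \in \bbC,\ |\Re s|\ \text{small.}
\]
At $s=0$, $\CL_0^E$ is a Markov operator fixing the constants, and the type-F hypotheses (strong irreducibility together with \eqref{cdn:ctrct}) guarantee, via F\"urstenberg's theory, a unique stationary measure $m_E$ on $\bbR\bbP^1$ and a spectral gap for $\CL_0^E$ on $C^\gamma$: a Doeblin--Fortet (Lasota--Yorke type) inequality yields quasicompactness, and irreducibility yields simplicity of the leading eigenvalue $1$. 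Analytic perturbation theory then produces a simple leading eigenvalue $\kappa_E(s)$, analytic near $s=0$, with $\kappa_E(0)=1$. Writing $p_E(s) = \log\kappa_E(s)$, one obtains the standard identities $p_E(0)=0$ and $p_E'(0) = \int\!\int \sigma(M,\bar v)\,\dd m_E(\bar v)\,\dd\nu_E(M) = L(E)$, while convexity gives $p_E''(0) \ge 0$ (the asymptotic variance). The gap moreover yields $\int \|A_n^E v\|^s\,\dd\mu \asymp \kappa_E(s)^n$ with multiplicative constants controlled by $\gamma$-norms of the leading eigenprojection.

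The crux is uniformity of this picture over the compact set $K$, and this is where I expect the main obstacle to lie. Hypothesis \eqref{cdn:UB} makes $\sigma$ uniformly bounded, so the operators $\CL_s^E$ are uniformly bounded on $C^\gamma$ and the constants in the Doeblin--Fortet inequality may be chosen independently of $E$; hypothesis \eqref{cdn:UE} makes $E \mapsto A^E(w)$ uniformly equicontinuous, hence $E \mapsto \CL_s^E$ is continuous in operator norm, uniformly for $|\Re s|$ small. Given a gap for each individual $E$, upper semicontinuity of the spectrum under norm-continuous perturbation together with a standard compactness argument on $K$ upgrades this to a \emph{uniform} gap: there exist $\rho<1$ and $\delta>0$ so that for all $E \in K$ and $|\Re s|\le\delta$ the remainder of the spectrum of $\CL_s^E$ lies in a disk of radius $\rho\,|\kappa_E(s)|$. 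Consequently $\kappa_E(s)$ and its first two $s$-derivatives vary continuously and are uniformly bounded over $E \in K$; in particular $\sup_{E\in K}\|p_E''\|_{\infty}$ is finite on $|s|\le\delta$. Verifying that the quasicompactness estimates and the resulting gap can genuinely be taken uniform in $E$—rather than merely holding pointwise—is the delicate technical step.

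With uniform control of $p_E$ in hand, the estimate follows from Chebyshev's inequality and optimization. For the upper tail and $0 < s \le \delta$,
\[
\mu\!\left\{\tfrac1n\log\|A_n^E v\| \ge L(E)+\e\right\}
\le e^{-ns(L(E)+\e)}\!\int \|A_n^E v\|^s\,\dd\mu
\le C\, e^{-n\left(s\e \,-\, (p_E(s)-s\,p_E'(0))\right)}.
\]
Since $p_E(s)-s\,p_E'(0) \le \tfrac12 s^2 \sup_{E\in K}\|p_E''\|_{\infty}$, choosing $s=s(\e,K)>0$ small renders the exponent $s\e - O(s^2)>0$ uniformly over $E\in K$; the lower tail is handled identically with $-\delta \le s < 0$. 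Combining these one-sided single-vector bounds with the reductions of the first paragraph yields the desired $C=C(\e,K)$ and $\eta=\eta(\e,K)$, completing the proof.
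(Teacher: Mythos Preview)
The paper does not prove this theorem; it is quoted verbatim as Theorem~3.1 of \cite{BDFGVWZ} and used as a black box, so there is no in-paper proof to compare against. Your sketch follows the classical Le~Page transfer-operator route (spectral gap for the Markov operator on a H\"older space, analytic perturbation of the leading eigenvalue, Chebyshev plus optimization), which is precisely the strategy carried out in \cite{BDFGVWZ}; you also correctly identify the uniform spectral gap over the compact parameter set $K$ as the delicate point, and the mechanism you propose---norm-continuity of $E\mapsto \CL_s^E$ from \eqref{cdn:UE}, uniform Doeblin--Fortet constants from \eqref{cdn:UB}, then compactness---is the right one.
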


For the 1D continuum Anderson model described above, \eqref{eq:NCCAMdef} implies that $G_{\nu_E}$ is a type-F subgroup of $\SL(2,\bbR)$ away from a discrete set of energies:

\begin{theorem}\label{t:main2}
With notation as above, if $\widetilde{\mu}$ satisfies \eqref{eq:NCCAMdef}, then there is a discrete set $D \subset \bbR$ such that, for any $E \in \bbR\setminus D$, 
$G_{\nu_E}$ is a type-F subgroup of $\SL(2,\bbR)$.

\end{theorem}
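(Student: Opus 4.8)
The plan is to deduce Theorem~\ref{t:main2} from the abstract parametric statement Theorem~\ref{t:paramFurst} by verifying its four hypotheses for a suitable pair of transfer matrices built from the nontriviality condition. First I would invoke \eqref{eq:NCCAMdef} to fix $f_1, f_2 \in \supp\widetilde\mu$ with $f_1 \star f_2 \neq f_2 \star f_1$ in $L^2$. The natural candidates for the maps $A, B : \bbC \to \SL(2,\bbC)$ are $A(z) = A^z(f_2) A^z(f_1)$ and $B(z) = A^z(f_1) A^z(f_2)$, i.e.\ the transfer matrices over the concatenations $f_1\star f_2$ and $f_2 \star f_1$ — note these are conjugate (hence share a trace) but generate a richer group than either $A^z(f_j)$ alone, and crucially $[A(z),B(z)] \ne 0$ will be governed by the failure of commutativity of the underlying potentials. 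Then, provided the hypotheses of Theorem~\ref{t:paramFurst} hold, we obtain a discrete $D$ such that for $x \in \bbR\setminus D$ the group generated by $A(x)$ and $B(x)$ is type-F; since this group is contained in $G_{\nu_x}$ (both $A^x(f_1)A^x(f_2)$ and $A^x(f_2)A^x(f_1)$ lie in the group generated by $\supp\nu_x$, which in turn sits inside $G_{\nu_x}$), monotonicity of the type-F property finishes the proof.

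The key steps, in order, are the verification of \eqref{eq:ABanalytic}--\eqref{eq:ABdontCommute}. For \eqref{eq:ABanalytic}: the transfer matrix $A^z(w)$ of a Schr\"odinger operator with fixed $L^2$ potential on a fixed interval depends analytically on the energy $z$ (solutions of the ODE depend analytically on the spectral parameter by standard ODE theory / a Volterra iteration argument), with real entries for real $z$; products of such preserve this, so $A,B$ are real-analytic. For \eqref{eq:ellipticRealEnergy}: if $\tr A(z) \in [-2,2]$ then, since $A(z)$ is the monodromy matrix of a periodic Schr\"odinger operator (period $\ell(f_1)+\ell(f_2)$), $z$ lies in the spectrum of that periodic operator, which is a subset of $\bbR$ — this is the classical Floquet-theoretic fact that the spectrum of a periodic Schr\"odinger operator is real (it is self-adjoint). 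For \eqref{eq:traceNonConst}: the trace of a periodic monodromy matrix (the Floquet discriminant) is a nonconstant — indeed unbounded as $z \to -\infty$ along the reals — entire function of $z$; alternatively one can argue it cannot be constant because then the periodic spectrum would be all of $\bbR$ or empty, which is absurd for a genuine one-dimensional periodic Schr\"odinger operator.

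The main obstacle is \eqref{eq:ABdontCommute}: producing at least one $z_0 \in \bbC$ with $[A(z_0), B(z_0)] \neq 0$, using only that $f_1\star f_2$ and $f_2\star f_1$ differ on a set of positive Lebesgue measure. The idea is that $[A(z),B(z)] \equiv 0$ would force, for every $z$, the two transfer matrices $M_{12}(z) := A^z(f_1\star f_2)$ and $M_{21}(z) := A^z(f_2 \star f_1)$ to commute; since they already share the same trace and determinant, commuting (generically in $z$) would make them equal, and equality of transfer matrices over a fixed interval for all $z$ forces equality of the potentials — this is exactly a \emph{Borg-type / inverse spectral uniqueness} statement (two $L^2$ potentials on $[0,\ell)$ with identical transfer matrices at all energies coincide a.e.), which is where the ``inverse spectral theory'' advertised in the abstract enters. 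One must be slightly careful: $[A(z),B(z)]=0$ for all $z$ does not instantly give $M_{12}=M_{21}$ at every $z$ (commuting matrices with equal trace and determinant can still be an unequal non-diagonalizable pair), so I would instead argue that for $z$ in the open elliptic region of the common periodic operator — which is nonempty by \eqref{eq:traceNonConst} — commuting plus ellipticity forces $M_{12}(z) = M_{21}(z)$, hence on a set with an accumulation point, hence identically by analyticity, and then apply the inverse-spectral uniqueness theorem to contradict $f_1\star f_2 \neq f_2\star f_1$. This both establishes \eqref{eq:ABdontCommute} and is the only place the hypothesis \eqref{eq:NCCAMdef} is genuinely used.
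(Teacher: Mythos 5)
Your overall strategy---build two energy-dependent matrices from the $f_1,f_2$ supplied by \eqref{eq:NCCAMdef}, verify the hypotheses of Theorem~\ref{t:paramFurst}, and use a Borg--Marchenko-type uniqueness statement (Lemma~\ref{l:BorgMarchenko}) to rule out an identically vanishing commutator---is exactly the paper's. Your verifications of \eqref{eq:ABanalytic}, \eqref{eq:traceNonConst}, and \eqref{eq:ellipticRealEnergy}, and the final monotonicity step placing the generated group inside $G_{\nu_x}$, are all fine. The one substantive difference is your choice of generators, and that choice is what creates a gap. The paper takes $M_j(E) = A^E(f_j)$ themselves; since the transfer matrix over a concatenation is the reversed product, $[M_1(E),M_2(E)] \equiv 0$ says precisely that $A^E(f_1\star f_2) = A^E(f_2\star f_1)$ for every $E$, and Lemma~\ref{l:BorgMarchenko} then gives $f_1\star f_2 = f_2\star f_1$ a.e., contradicting \eqref{eq:NCCAMdef} directly---no ellipticity detour needed.

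By instead taking $A(z) = A^z(f_2)A^z(f_1)$ and $B(z) = A^z(f_1)A^z(f_2)$, you must pass from ``$A(z)$ and $B(z)$ commute'' back to ``$A(z) = B(z)$,'' and your resolution of that step is where the gap lies: two commuting, conjugate, elliptic matrices in $\SL(2,\bbR)$ need \emph{not} be equal---one can be the inverse of the other. Concretely, if $A(z)$ has eigenvalues $e^{\pm i\theta}$ ($\theta \neq 0,\pi$) with eigenvectors $v,\bar v$, then any real matrix commuting with $A(z)$ and sharing its eigenvalues is, in that eigenbasis, either $\mathrm{diag}(e^{i\theta},e^{-i\theta}) = A(z)$ or $\mathrm{diag}(e^{-i\theta},e^{i\theta}) = A(z)^{-1}$, and the second option is perfectly consistent with $B(z)$ being real (its $e^{i\theta}$-eigenvector is then $\bar v$). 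So ``commuting plus ellipticity forces $M_{12}(z) = M_{21}(z)$'' is false as stated. The loophole is closable: by connectedness and analyticity the alternative would force $A(z)B(z) = A^z(f_2\star f_1\star f_1\star f_2) \equiv I$, which is impossible since a transfer matrix over an interval of positive length cannot be identically $I$ in $E$ (its $(1,2)$-entry vanishes only at the Dirichlet eigenvalues of the corresponding interval operator, a discrete set). But as written your argument has a hole---and the hole vanishes entirely if you simply take $A^z(f_1)$ and $A^z(f_2)$ as the generators, which still lie in $G_{\nu_z}$.
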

Theorem \ref{t:main2} resolves the foremost obstructions to proving Theorem \ref{t:main}.  Indeed,  the boundedness condition \eqref{cdn:mutildeUnifL2bd} and the cocycle structure \eqref{eq:cocycle} of $A^E$ imply \eqref{cdn:UE} and \eqref{cdn:UB} (cf. \cite[Lemma 3.3]{BDFGVWZ}). Thus, Theorem~\ref{t:main} follows from Theorems~\ref{t:fberg}, \ref{t:LDT}, and \ref{t:main2}.

To prove Theorem \ref{t:main2} we will use the following Lemma, which essentially follows from classical inverse spectral theory -- namely the Borg--Marchenko theorem; compare \cite{Bennewitz, Borg, Marchenko}.

\begin{lemma} \label{l:BorgMarchenko}
If $V_1,V_2 \in L^2[0,T)$ and
\begin{equation} \label{eq:allTMsEqual}
A^E(V_1) = A^E(V_2)
\text{ for every }
E \in \bbC,
\end{equation}
then $V_1 = V_2$ Lebesgue almost everywhere on $[0,T)$.
\end{lemma}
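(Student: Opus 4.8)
The plan is to deduce the lemma from the classical Borg--Marchenko uniqueness theorem, using the transfer matrix $A^E(V)$ to recover a Weyl--Titchmarsh $m$-function whose high-energy asymptotics determine the potential. First I would recall that $A^E(V)$, being the solution operator at $x=T$ for $-\psi'' + V\psi = E\psi$ on $[0,T)$, encodes the full fundamental system: writing $A^E(V) = \begin{bmatrix} c(E) & s(E) \\ c'(E) & s'(E) \end{bmatrix}$, the entries are the values at $T$ of the solutions $c,s$ determined by the initial conditions $c(0)=s'(0)=1$, $c'(0)=s(0)=0$. Thus \eqref{eq:allTMsEqual} says that $V_1$ and $V_2$ produce the same Cauchy data map from $0$ to $T$ for every complex energy.

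Next I would pass to a half-line problem in order to invoke Borg--Marchenko in its cleanest form. Fix a self-adjoint boundary condition at $x=T$ (say the Dirichlet condition $\psi(T)=0$), and consider the $m$-function $m_j(E)$ associated to $H_j = -\partial_x^2 + V_j$ on $[0,T)$ with that boundary condition at $T$ and, say, the Dirichlet condition removed at $0$ so that $m_j$ is the Weyl function at the left endpoint. The point is that $m_j(E)$ is an explicit Möbius function of the entries of $A^E(V_j)$: the Weyl solution at $0$ is the linear combination of $c$ and $s$ that satisfies the boundary condition at $T$, and $m_j(E) = \psi_j'(0)/\psi_j(0)$ for that solution, which is a ratio of entries of $A^E(V_j)$. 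Hence \eqref{eq:allTMsEqual} forces $m_1 \equiv m_2$ on $\bbC$ (away from poles). By the Borg--Marchenko theorem (in the local form due to Simon, or as in \cite{Bennewitz, Borg, Marchenko}), equality of the $m$-functions — indeed, merely the coincidence of their asymptotic expansions as $E \to -\infty$ along the imaginary axis — implies $V_1 = V_2$ a.e.\ on $[0,T)$.

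There is one bookkeeping subtlety I would address: one must make sure the chosen boundary condition at $T$ does not accidentally make $\psi_j(0)$ vanish identically or create a spurious obstruction, but this is handled by noting that $A^E(V_j) \in \SL(2,\bbC)$ is invertible, so $c(E)$ and $s(E)$ cannot simultaneously vanish, and one may switch between a couple of boundary conditions at $T$ if needed; alternatively, one can avoid the half-line reduction entirely and argue directly that the matrix $A^E(V)$ determines the Dirichlet-to-Neumann map of the interval, and cite the interval version of Borg--Marchenko. Either way, the essential input is that finitely much spectral/scattering data — here packaged as the $E$-dependent transfer matrix — over-determines an $L^2$ potential on a finite interval.

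The main obstacle is not conceptual but one of citing the right form of the uniqueness theorem: the cleanest statements of Borg--Marchenko assume $V \in L^1_{\mathrm{loc}}$ with some regularity, and one wants the version valid for $V \in L^2[0,T)$ with no boundary regularity and with the conclusion phrased as almost-everywhere equality; I would lean on Bennewitz's treatment \cite{Bennewitz}, which handles exactly this generality, and simply verify that the map from $A^E$ to the $m$-function is a bijection onto the relevant class so that the hypotheses transfer. The analyticity of all objects in $E$ — which lets one promote "asymptotics agree" to "functions agree" and vice versa — is automatic since the entries of $A^E(V)$ are entire functions of $E$ for fixed $V \in L^2[0,T)$.
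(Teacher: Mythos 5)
Your proposal is correct and follows essentially the same route as the paper: read off from the equality of transfer matrices that the Weyl $m$-functions at $x=0$ (defined via a Dirichlet condition at $T$) coincide, then invoke the (local) Borg--Marchenko uniqueness theorem, in the form of Simon's \cite{Simon1999Annals}, to conclude $V_1 = V_2$ a.e. The only cosmetic difference is that the paper sidesteps your ``bookkeeping subtlety'' by restricting $E$ to $\bbC \setminus [-\beta,\infty)$ for $\beta$ large, where the Weyl solution is unique up to a constant and $u_j(0,E) \neq 0$.
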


\begin{proof}
Denote the $m$-function associated with $V_j$ by $m_j$. That is, taking $\beta$ large enough, then, for every $E \in \bbC \setminus [-\beta,\infty)$, there is a unique (modulo an overall multiplicative constant) solution $u_j = u_j(\cdot,E)$ of
$-u_j'' + V_j u_j = E u_j$
that satisfies a Dirichlet boundary condition at $T$. One then defines the $m$-functions by
\[
m_j(E)
=
\frac{u_j'(0,E)}{u_j(0,E)}.
\]
However, by equality of the cocycles \eqref{eq:allTMsEqual}, it is easy to see $m_1 \equiv m_2$, whence $V_1 \equiv V_2$ (a.e.) by \cite[Theorem~1.1]{Simon1999Annals}.
\end{proof}

We are now ready to prove Theorem \ref{t:main2}, and consequently Theorem \ref{t:main}.

\begin{proof}[Proof of Theorem~\ref{t:main2}]
Let $f_j$ be as in assumption \eqref{eq:NCCAMdef}, denote by $M_j(E)= A^E(f_j)$ the associated monodromies. Notice that Assumptions~\eqref{eq:ABanalytic}, \eqref{eq:traceNonConst}, and \eqref{eq:ellipticRealEnergy} are  satisfied by $M_1$ and $M_2$.  Lemma~\ref{l:BorgMarchenko} and \eqref{eq:NCCAMdef} imply that $[M_1(E), M_2(E)]$ does not vanish identically, i.e. \eqref{eq:ABdontCommute} holds. Thus, the theorem follows from Theorem~\ref{t:paramFurst}. \qedhere

\end{proof}

%
%

\end{document}